\newcommand{\bsb}[1]{\boldsymbol{#1}}
\newtheorem{The}{Theorem}
\newtheorem{Lem}[The]{Lemma}
\newtheorem{Pro}{Proposition}
\theoremstyle{definition}
\newtheorem{De}[The]{Definition}
\newtheorem{Exa}{Example}
\newtheorem{Rem}{Remark}
\def\maj{{\scriptstyle \mathsf{MAJ}}}
\def\SS{\frak S}
\def\llsq{[\hspace{-0.3mm}[}
\def\rrsq{]\hspace{-0.3mm}]}
\title{Generating permutations with a given major index}
\author{
Vincent {\sc Vajnovszki}\\ 
{\small LE2I, Universit\'e de Bourgogne}\\
{\small BP 47870, 21078 Dijon Cedex, France}\\
{\small \tt vvajnov@u-bourgogne.fr}
}
\begin{document}
\maketitle

\large

\begin{abstract}

In [S. Effler, F. Ruskey, A CAT algorithm for listing permutations with 
a given number of inversions, {\it I.P.L.}, 86/2 (2003)]
the authors give an algorithm, which appears to be CAT, for 
generating permutations with a given major index.
In the present paper we give a new algorithm for generating 
a Gray code for subexcedant sequences. We show that this
algorithm is CAT and
derive it into a CAT generating algorithm for 
a Gray code for permutations
with a given major index.
\end{abstract}

\section{Introduction}

We present the first guaranteed constant average time generating algorithm for 
permutations with a fixed index. 
First we give a co-lex order generating algorithm
for bounded compositions. Changing its generating order and 
specializing it for particular classes of compositions
we derive a generating algorithms for 
a Gray code for fixed weight subexcedant sequences;
and after some improvements we obtain an efficient 
version of this last algorithm.
The generated Gray code has the remarkable property that
two consecutive sequences differ in at most
three adjacent positions and by a bounded amount in these positions.
Finally applying a bijection introduced in \cite{Vaj_11} between subexcedant sequences and 
permutations with a given index we derive the desired algorithm,
where consecutive generated permutations differ by at most three 
transpositions.

Often, Gray code generating algorithms can be re-expressed simpler
as algorithms with the same time complexity and generating the same class of objects, 
but in different ({\em e.g.} lexicographical) order. 
This is not the case in our construction: the {\em Grayness} of the 
generated subexcedant sequences  is critical in the construction of
the efficient algorithm generating permutations with a fixed
index.

\medskip

A {\em statistic} on the set $\SS_n$ of length $n$ permutations
is an association of an element of $\mathbb{N}$
to each permutation in $\SS_n$.
For $\pi\in\SS_n$ the {\em  major index}, $\maj$, is a statistic
defined by (see, for example, \cite[Section 10.6]{Lothaire_83})

$$
\displaystyle \maj\, \pi = \mathop{\sum_{1\leq i <n}}_{\pi_i>\pi_{i+1}} i.
$$

\begin{De}
For two integers $n$ and $k$, an {\it $n$-composition of $k$}
is an $n$-sequence $\bsb{c}=c_1c_2\ldots c_n$ of non-negative integers
with $\sum_{i=1}^n c_i=k$.
For an $n$-sequence $\bsb{b}=b_1b_2\ldots b_n$, $\bsb{c}$ is said 
{\it $\bsb{b}$-bounded if  $0\leq c_i\leq b_i$}, for all $i$, $1\leq i\leq n$.
\end{De}

In this context $b_1b_2\ldots b_n$  is called  {\it bounding sequence}
and we will consider only bounding sequences
with either $b_i>0$ or $b_i=b_{i-1}=\ldots =b_1=0$
for all $i$, $1\leq i\leq n$. Clearly, $b_i=0$ is equivalent
to fix $c_i=0$.
We denote by $C(k,n)$ the set of all $n$-compositions of $k$,
and by $C^{\bsb{b}}(k,n)$ the set of $\bsb{b}$-bounded $n$-compositions of $k$;
and if $b_i\geq k$ for all $i$, then $C^{\bsb{b}}(k,n)=C(k,n)$.

\begin{De}
A {\it subexcedant sequence} $\bsb{c}=c_1c_2\ldots c_n$ is an $n$-sequence with 
$0\leq c_i\leq i-1$, for all $i$; and $\sum_{i=1}^nc_i$ is called
the {\it weight} of $\bsb{c}$.
\end{De}

We denote by $S(k,n)$ the set 
of length $n$ and weight $k$ subexcedant sequences,
and clearly $S(k,n)=C^{\bsb{b}}(k,n)$ with $\bsb{b}=0\,1\,2\,\ldots \,(n-1)$.

\section{Generating fixed weight subexcedant sequences}


We give three  generating algorithms, and the third one generates 
efficiently combinatorial objects in bijection with 
permutations having fixed index :
\begin{itemize}
\item {\tt Gen\_Colex} generates the set $C^{\bsb{b}}(k,n)$
      of bounded compositions in co-lex order (defined later). 
\item {\tt Gen1\_Gray} which is obtained from  {\tt Gen\_Colex} by: 
      \begin{itemize}
      \item changing its generating order, and
      \item restricting it to the bounding sequence $\bsb{b}=01\ldots (n-1)$. 
      \end{itemize}
      It produces a Gray code for the set $S(k,n)$,
      and it can be seen as the definition of this Gray code.
\item {\tt Gen2\_Gray} is a an efficient version of {\tt Gen1\_Gray}. 
\end{itemize}

Finally, in Section~\ref{gen_perms}, regarding the subexcedant
sequences in $S(k,n)$ as McMahon permutation codes
(defined in Section~\ref{sec_Mc_code}), a constant average time 
generating algorithm for a Gray code for the set of permutations of length $n$
with the major index equals $k$ is obtained.

\subsection{Algorithm {\tt Gen\_Colex}}

This algorithm generates
$C^{\bsb{b}}(k,n)$ in {\it co-lex order}, which is defined as:
$c_1c_2\ldots c_n$ precedes $d_1d_2\ldots d_n$ in co-lex order
if $c_nc_{n-1}\ldots c_1$ precedes $d_nd_{n-1}\ldots d_1$ in lexicographical
order. Its worst case time complexity is  $O(k)$ per composition.

For a set of bounded compositions $C^{\bsb{b}}(k,n)$,
an {\it increasable position} (with respect to $C^{\bsb{b}}(k,n)$) in 
a sequence $c_1c_2\ldots c_n\notin C^{\bsb{b}}(k,n)$
is an index $i$ such that:
\begin{itemize}
\item $c_1=c_2=\ldots c_{i-1}=0$, and
\item there is a composition $d_1d_2\ldots d_n\in C^{\bsb{b}}(k,n)$ 
with $c_i<d_i$ and $c_{i+1}=d_{i+1}$, $c_{i+2}=d_{i+2}$, \dots, 
$c_n=d_n$.
\end{itemize}

For example, for $C^{01233}(3,5)$ the increasable positions
are underlined in the following sequences: $0\,0\,\underline{0}\,\underline{1}\,0$ 
and $0\,\underline{0}\,2\,0\,0$. 
Indeed, the first two positions in $0\,0\,0\,1\,0$ are not
increasable since there 
is no composition in $C^{01233}(3,5)$ with the suffix $0\,1\,0$;
and the third position in $0\,0\,2\,0\,0$ is not increasable
because $2$ is the maximal value
in this position.
Clearly, if $\ell<r$ are two increasable positions in $\bsb{c}$, 
then each $i$, $\ell<i<r$, is still an increasable position in $\bsb{c}$
(unless $b_i=0$).

Here is the sketch of the co-lex order
generating procedure for $C^{\bsb{b}}(k,n)$:

\begin{itemize}
\item[$\bullet$] initialize $\bsb{c}$ by the length $n$ sequence $0\,0\,\ldots\, 0$;
\item[$\bullet$] for each increasable position $i$ in $\bsb{c}$, increase
      $c_i$ by one and call recursively the generating
      procedure if the obtained sequence $\bsb{c}$ is not a composition in $C^{\bsb{b}}(k,n)$,
      and output it elsewhere.
\end{itemize}


The complexity of the obtained algorithm is $O(k)$ per generated composition
and so inefficient. Indeed, too many nodes in the generating tree induced by 
this algorithm have degree one. Algorithm {\tt Gen\_Colex} in Figure 
\ref{algo_colex} avoids some of these nodes.
We will identify a node in a generating tree by the corresponding
value of the sequence $\bsb{c}$; and
a {\it redundant node} in a generating tree 
induced by the previous sketched algorithm is a node with 
a unique successor and which differs in the same position
from its ancestor and its successor.
For example, in Figure \ref{two_tree} (a) redundant nodes are:
$0\,0\,0\,1$, $0\,0\,0\,2$, $0\,0\,1\,3$, $0\,0\,2\,3$ and $0\,1\,3\,3$.
These nodes occur when, for a given suffix, the 
smallest value allowed in an increasable position in the
current sequence $\bsb{c}$ is not $1$, and this position is
necessarily $\ell$, the leftmost increasable one. Algorithm {\tt Gen\_Colex} 
avoids redundant nodes by setting $c_{\ell}$ to its minimal value $e=k-\sum_{j=1}^{\ell-1}b_j$
(and $\sum_{j=1}^{i}b_j$ can be computed for each $i$, $1\leq i\leq n$,
in a pre-processing step). 
For example, in Figure \ref{two_tree} (b) there are no redundant nodes. 
However, in the generating tree induced by {\tt Gen\_Colex}
there still remain arbitrary length sequences of successive nodes  
with a unique successor; they are avoided in procedure {\tt Gen2\_Gray}.
 
Algorithm  {\tt Gen\_Colex} is given in Figure \ref{algo_colex} 
where $\ell$ is the leftmost increasable position in the 
current sequence $\bsb{c}$, and $r$ the leftmost non-zero position in $\bsb{c}$,
and thus the rightmost increasable 
position in $\bsb{c}$ is $r$ if $c_r<b_r$ and $r-1$ elsewhere ($b_1b_2\ldots b_n$ being the
bounding sequence).
The main call is {\tt Gen\_Colex($k$,$n$)}
and initially $\bsb{c}$ is $0\,0\,\ldots\, 0$.
(As previously, in this algorithm the function 
$k\mapsto \min\{s\,|\,\sum_{j=1}^s b_j\geq k\}$
can be computed and stored in an array, in a pre-processing step.)

The induced generating tree for the call {\tt Gen\_Colex($4$,$5$)} is given 
in Figure \ref{fig_2_trees} (a).

\begin{figure}
\begin{tabular}{cc}
(a) & {
\psset{levelsep=14mm,treesep=3mm,radius=-0.5mm}
\pstree[nodesep=1mm,treemode=R]{\TR{$00000$}}
{ \pstree{\TR{  $\mathbf{00001}$}}
  { \pstree{\TR{ $\mathbf{00002}$}}
    { \pstree{\TR{ $00003$}}
      { \pstree{\TR{ $\cdots$}}{}
        \pstree{\TR{ $\mathbf{00013}$}}
	{ \pstree{\TR{ $\mathbf{0023}$}}
	  { \pstree{\TR{ $00033$}}
	    { \pstree{\TR{ $\mathbf{00133}$}}
	      { \pstree{\TR{ $00233$}}
	         {\pstree{\TR{ $01233$}}{}
		 }
	      }	    
	    }
	  }
	}
      }
    }
  }  
}
}\\
\\
\\
(b) & 
{
\psset{levelsep=14mm,treesep=3mm,radius=-0.5mm}
\pstree[nodesep=1mm,treemode=R]{\TR{$00000$}}
{ \pstree{\TR{  $00003$}}
      {  \pstree{\TR{ $\cdots$}}{}
         \pstree{\TR{ $00033$}}
	    {  \pstree{\TR{ $00233$}}
	         {\pstree{\TR{ $01233$}}{}
		 }
	      }	    
	    }
}
}
\end{tabular}
\caption{
\label{two_tree}
The path from the root $0\,0\,0\,0\,0$ to the composition $0\,1\,2\,3\,3\in C^{01234}(9,5)$:
(a) before deleting redundant nodes (in boldface);
and (b) in the generating tree 
induced by the call of {\tt Gen\_Colex($9,5$)} where redundant nodes are avoided. 
}
\end{figure}

\begin{figure}[h]
\begin{center}
\begin{tabular}{|c|}
\hline
{\tt
\begin{minipage}[c]{.46\linewidth}
\begin{tabbing}
\hspace{0.75cm}\=\hspace{1.cm}\=\hspace{0.5cm}\=\hspace{1.6cm}\=\hspace{1.9cm}
            \=\hspace{1.5cm}\kill
procedure Gen\_Colex($k$,$r$)\\
\>global $n,c,b$;\\
\> if $k=0$\\
\> then print $c$;\\
\> else \> if $c[r]=b[r]$ \\
\>      \>then $r:=r-1$; \\
\>      \>end if\\
\>      \> $\ell:=\min\{s\,|\,\sum_{j=1}^s b[j]\geq k\}$;\\
\>      \> for $i:=\ell$ to $r$ do\\        
\>      \> \> if $i=\ell$ \> then $e:=k-\sum_{j=1}^{\ell-1}b[j]$;\\
\>      \> \>             \> else  $e:=1$;\\
\>      \> \> end if\\
\>      \> \> $c[i]:=c[i]+e$;\\
\>      \> \> Gen\_Colex($k-e$,$i$);\\   
\>      \> \> $c[i]:=c[i]-e$;\\
\>      \>end do\\
\> end if\\
end procedure.
\end{tabbing}
\end{minipage}}\\
\hline
\end{tabular}
\caption{\label{algo_colex} Algorithm {\tt Gen\_Colex}.}
\end{center}
\end{figure}

\begin{figure}
\begin{tabular}{cc}
  \begin{tabular}{|c|}
  \hline
  \\
{
\small
\psset{levelsep=15mm,treesep=3mm,radius=-0.5mm}
\pstree[nodesep=1mm,treemode=R]{\TR{$00000$}}
{ \pstree{\TR{  $00010$}}
  { \pstree{\TR{ $00210$}}
    { \pstree{\TR{ $\bsb{01210}$}}{}
    }
    \pstree{\TR{ $00020$}}
    {\pstree{\TR{ $00120$}}
     {\pstree{\TR{ $\bsb{01120}$}}{}
      \pstree{\TR{ $\bsb{00220}$}}{}
     }
     \pstree{\TR{ $00030$}}
     {\pstree{\TR{ $\bsb{01030}$}}{}
      \pstree{\TR{ $\bsb{00130}$}}{}
     }
    }
  }
 \pstree{\TR{ $00001$}}
 { \pstree{\TR{ $00201$}}
   { \pstree{\TR{ $\bsb{01201}$}}{}
   }
   \pstree{\TR{ $00011$}}
   {\pstree{\TR{ $00111$}}
    {\pstree{\TR{ $\bsb{01111}$}}{}
     \pstree{\TR{ $\bsb{00211}$}}{}
    }
    \pstree{\TR{ $00021$}}
    {\pstree{\TR{ $\bsb{01021}$}}{}
     \pstree{\TR{ $\bsb{00121}$}}{}
     \pstree{\TR{ $\bsb{00031}$}}{}
    }
   }
   \pstree{\TR{ $00002$}}
   {\pstree{\TR{ $00102$}}
    {\pstree{\TR{ $\bsb{01102}$}}{}
     \pstree{\TR{ $\bsb{00202}$}}{}
    }
    \pstree{\TR{ $00012$}}
    {\pstree{\TR{ $\bsb{01012}$}}{}
     \pstree{\TR{ $\bsb{00112}$}}{}
     \pstree{\TR{ $\bsb{00022}$}}{}
    }
    \pstree{\TR{ $00003$}}
    {\pstree{\TR{ $\bsb{01003}$}}{}
     \pstree{\TR{ $\bsb{00103}$}}{}
     \pstree{\TR{ $\bsb{00013}$}}{}
     \pstree{\TR{ $\bsb{00004}$}}{}
    }
   }
 }  
}
} 
  \\
  \\
  \hline
  \end{tabular}
& 
  \begin{tabular}{|c|}
  \hline
  \\ 
{
\small
\psset{levelsep=15mm,treesep=3mm,radius=-0.5mm}
\pstree[nodesep=1mm,treemode=R]{\TR{$00000$}}
{ \pstree{\TR{  $00010$}}
  { \pstree{\TR{ $00210$}}
    { \pstree{\TR{ $\bsb{01210}$}}{}
    }
    \pstree{\TR{ $00020$}}
    {\pstree{\TR{ $00030$}}
     {\pstree{\TR{ $\bsb{01030}$}}{}
      \pstree{\TR{ $\bsb{00130}$}}{}
     }
     \pstree{\TR{ $00120$}}
     {\pstree{\TR{ $\bsb{00220}$}}{}
      \pstree{\TR{ $\bsb{01120}$}}{}
     }
    }
  }
 \pstree{\TR{ $00001$}}
 { \pstree{\TR{ $00002$}}
   {\pstree{\TR{ $00102$}}
    {\pstree{\TR{ $\bsb{01102}$}}{}
     \pstree{\TR{ $\bsb{00202}$}}{}
    }
    \pstree{\TR{ $00012$}}
    {\pstree{\TR{ $\bsb{01012}$}}{}
     \pstree{\TR{ $\bsb{00112}$}}{}
     \pstree{\TR{ $\bsb{00022}$}}{}
    }
    \pstree{\TR{ $000003$}}
    {\pstree{\TR{ $\bsb{00004}$}}{}
     \pstree{\TR{ $\bsb{00013}$}}{}
     \pstree{\TR{ $\bsb{00103}$}}{}
     \pstree{\TR{ $\bsb{01003}$}}{}
    }
   }
   \pstree{\TR{ $00011$}}
   {\pstree{\TR{ $00021$}}
    {\pstree{\TR{ $\bsb{01021}$}}{}
     \pstree{\TR{ $\bsb{00121}$}}{}
     \pstree{\TR{ $\bsb{0003}1$}}{}
    }
    \pstree{\TR{ $00111$}}
    {\pstree{\TR{ $\bsb{00211}$}}{}
     \pstree{\TR{ $\bsb{01111}$}}{}
    }
   }
   \pstree{\TR{ $00201$}}
   {\pstree{\TR{ $\bsb{01201}$}}{}
   }
 }  
}
}
  \\
  \\
  \hline
  \end{tabular}
\\
(a) & (b)
\end{tabular}
\caption{\label{fig_2_trees}
(a): The tree induced by the call of {\tt Gen\_Colex($4$,$5$)}
with $\bsb{b}=0\,1\,2\,3\,4$, and (b): that induced
by {\tt Gen1\_Gray($4$,$5$)}. Terminal nodes are in bold-face }
\end{figure}

\subsection{Algorithm {\tt Gen1\_Gray}}  

This algorithm is defined in Figure \ref{algos_Gen1_Gray}
and is derived from {\tt Gen\_Colex}:  the order of recursive calls
is changed according to a direction (parameter $dir$), and it is specialized for 
bounding sequences $\bsb{b}=0\,1\,2\,\ldots\, (n-1)$, and so
it produces subexcedant sequences. 
It has the same time complexity as {\tt Gen\_Colex} and 
we will show that it produces a Gray code.

The call of {\tt Gen1\_Gray} with $dir=0$ produces, in order,
a recursive call with $dir=0$, then $r-\ell$ calls in the {\tt for} statement
with $dir$ equals successively:
\begin{itemize}
\item $0,1,\ldots 0,1$, if $r-\ell$ is even, and 
\item $1,0,\ldots 1,0,1$, if $r-\ell$ is odd.
\end{itemize}
In any case, the value of $dir$ corresponding to the last call is $1$.

The call of {\tt Gen1\_Gray} with $dir=1$ produces the same
operations as previously but in reverse order,
and in each recursive call the value of $dir$ is replaced by $1-dir$.
Thus, the call of {\tt Gen1\_Gray} with $dir=1$ produces,
in order, $r-\ell$ calls in the {\tt for} statement
with $dir$ equals alternatively $0,1,0,\ldots$, then a last call with $dir=1$.
See Figure \ref{fig_2_trees} (b) for an example of generating tree 
induced by this procedure.


Let $\mathcal{S}(k,n)$ be the {\it ordered list}
for $S(k,n)$ generated by the call {\tt Gen1\_Gray($k$,$n$,$0$)}, and
it is easy to see that $\mathcal{S}(k,n)$ is suffix partitioned, 
that is, sequences with the same suffix are contiguous;
and Theorem \ref{main_th} shows that $\mathcal{S}(k,n)$ is a Gray code.

For a sequence $\bsb{c}$, a $k\geq 1$ and $dir\in \{0,1\}$
we denote by  $\mathrm{first}(k;dir;\bsb{c})$ and $\mathrm{last}(k;dir;\bsb{c})$, 
the first and last subexcedant sequence produced by the
call of {\tt Gen1\_Gray$(k,r,dir)$} if the current 
sequence is $\bsb{c}$, and $r$ the position of the leftmost non-zero value 
in $\bsb{c}$.
In particular, if $\bsb{c}=0\,0\,\ldots\,0$, then
$\mathrm{first}(k;0;\bsb{c})$ is the first sequence in $\mathcal{S}(k,n)$,
and $\mathrm{last}(k;0;\bsb{c})$ the last one.

\newpage
\begin{Rem}$ $
\label{rem_2_points}
\begin{enumerate}
\label{rev_01_rem_reverse}
\item For a sequence $\bsb{c}$, the list produced by the call
      {\tt Gen1\_Gray$(k,r,0)$} is the reverse of the list 
      produced by the call {\tt Gen1\_Gray$(k,r,1)$}, and with the 
      previous notations we have
      \begin{eqnarray*}
      \mathrm{last}(k;dir;\bsb{c})=\mathrm{first}(k;1-dir;\bsb{c}),
      \end{eqnarray*}
      for $dir\in\{0,1\}$.
\item Since the bounding sequence is $\bsb{b}=0\,1\,\ldots\, (n-1)$
      it follows that, for $\bsb{c}=0\,0\,\ldots\, 0\,c_ic_{i+1}\ldots c_n$, 
      $c_i\neq 0$, $\mathrm{first}(k;0;\bsb{c})$ is
      \begin{itemize}
      \item $a_1a_2\ldots a_{i-1}c_ic_{i+1}\ldots c_n$ if 
      $k\leq\sum_{j=1}^{i-1}(j-1)=\frac{(i-1)\cdot(i-2)}{2}$,
      where $a_1a_2\ldots a_{i-1}$ is the smallest sequence, in co-lex order, in
      $S(k,i-1)$,
      \item $a_1a_2\ldots a_ic_{i+1}\ldots c_n$ if $k>\frac{(i-1)\cdot(i-2)}{2}$,
      where $a_1a_2\ldots a_i$ is the smallest sequence, in co-lex order, in
      $S(k+c_i,i)$.
\end{itemize}
\end{enumerate}
\end{Rem}

\begin{figure}[h]
\begin{tabular}{|c|}
\hline
\begin{minipage}[c]{.46\linewidth}
{\tt
\begin{tabbing}\hspace{1cm}\=\
\hspace{1.cm}\=\hspace{1.2cm}\=\hspace{2.cm}\=\hspace{1.9cm}
            \=\hspace{1.5cm}\kill
procedure Gen1\_Gray($k$,$r$,$dir$)\\
global $n,c,b$;\\
if $k=0$\\
then output $c$;\\
else \> if $c[r]=r-1$ \\
     \>then $r:=r-1$; \\
     \>end if\\     
     \> $\ell:=\min\{s\,|\,\frac{s(s-1)}{2} \geq k\}$;\\
     \> $e:=k-\frac{(\ell-1)(\ell-2)}{2}$;\\
     \> if $dir=0$ \\
     \> then \> $c[\ell]:=c[\ell]+e$; Gen1\_Gray($k-e$,$\ell$,$0$); $c[\ell]:=c[\ell]-e$;\\
     \>      \> $dir:=(r-\ell) \mod 2$;\\
     \>      \> for $i:=\ell+1$ to $r$ do\\
     \>      \>  \>$c[i]:=c[i]+1$; Gen1\_Gray($k-1$,$i$,$dir$); $dir:=(dir+1)\mod 2$; $c[i]:=c[i]-1$;\\
     \>      \>  end do\\
     \> else \> $dir:=0$;\\
     \>      \> for $i:=r$ downto $\ell+1$ do\\
     \>      \>  \>$c[i]:=c[i]+1$; Gen1\_Gray($k-1$,$i$,$dir$); $dir:=(dir+1)\mod 2$; $c[i]:=c[i]-1$;\\
     \>      \>  end do\\
     \>      \> $c[\ell]:=c[\ell]+e$; Gen1\_Gray($k-e$,$\ell$,$1$); $c[\ell]:=c[\ell]-e$;\\
     \> end if\\
end if\\
end procedure.
\end{tabbing}
}
\end{minipage}\\
\hline
\end{tabular}
\caption{\label{algos_Gen1_Gray} Algorithm {\tt Gen1\_Gray}, the Gray code 
counterpart of {\tt Gen\_Colex} specialized to subexcedant sequences.}
\end{figure}

Now we introduce the notion of close sequences.
Roughly speaking, two sequences are close if they differ in at most
three adjacent positions and by a bounded amount in these positions.
Definition \ref{3_tuple} below defines formally this notion, and 
Theorem \ref{main_th} shows that consecutive subexcedant
sequences generated by {\tt Gen1\_Gray} 
are close. 


Let $\bsb{s}=s_1s_2\ldots s_n$ and $\bsb{t}=t_1t_2\ldots t_n$ be two subexcedant sequences
of same weight which differ in at most three adjacent positions, and let $p$ 
be the rightmost of them (notice that necessarily $p\geq 3$).
The {\it difference} between $\bsb{s}$ and $\bsb{t}$
is the $3$-tuple 
$$
(a_1,a_2,a_3)=(s_{p-2}-t_{p-2},s_{p-1}-t_{p-1},s_p-t_p).
$$
Since $\bsb{s}$ and $\bsb{t}$ have same weight
it follows that $a_1+a_2+a_3=0$; and we denote by $-(a_1,a_2,a_3)$
the tuple $(-a_1,-a_2,-a_3)$.

\begin{De}
\label{3_tuple}
Two sequences $\bsb{s}$ and $\bsb{t}$ in $S(k,n)$ are 
{\it close} if:
\begin{itemize}
\item $\bsb{s}$ and $\bsb{t}$ differ in at most three adjacent positions, and
\item if $(a_1,a_2,a_3)$ is the difference between
$\bsb{s}$ and $\bsb{t}$, then 
$$
(a_1,a_2,a_3)\in \{\pm(0,1,-1),\pm(0,2,-2),\pm(1,-2,1),\pm(1,-3,2),\pm(1,1,-2),\pm(1,0,-1)\}.
$$
\end{itemize}
\end{De}

Even if the second point of this definition sound somewhat
arbitrary, it turns out that consecutive sequences generated by 
algorithm {\tt Gen1\_Gray} are close under this definition, 
and our generating algorithm for permutations with a given 
index in Section \ref{gen_perms} is based on it.

\begin{Exa}
The following sequences are close:
$0\underline{12}01$ and $0\underline{03}01$; 
$010\underline{03}$ and $010\underline{21}$;
$0\underline{020}1$ and $0\underline{101}1$;
$01\underline{132}$ and $01\underline{204}$;
the positions where the sequences differ are underlined.
Whereas the following sequences are not close:
$0\underline{0211}$ and $0\underline{1030}$ (they differ in more than
three positions); 
$01\underline{201}$ and $01\underline{030}$ (the difference $3$-tuple
is not a specified one).
\end{Exa}

\begin{Rem}
\label{rem_inter}
If $\bsb{s}$ and  $\bsb{t}$ are two close 
subexcedant sequences in $S(k,n)$, then there are at most two `intermediate' 
subexcedant sequences $\bsb{s'}$, $\bsb{s''}$ in $S(k,n)$
such that the differences 
between $\bsb{s}$ and $\bsb{s'}$,
between $\bsb{s'}$ and $\bsb{s''}$, and
$\bsb{s''}$ and $\bsb{t}$ are 
$\pm(1,-1,0)$.
\end{Rem}

\begin{Exa}
\label{un_example}
Let $\bsb{s}=0\,1\,0\,1\,1\,1$ and $\bsb{t}=0\,0\,2\,0\,1\,1$
be two sequences in $S(4,6)$. Then $\bsb{s}$ and $\bsb{t}$ are close since they 
difference is $(1,-2,1)$, and there is one `intermediate' sequence 
$\bsb{s'}=0\,0\,1\,1\,1\,1$ in $S(4,6)$ with
\begin{itemize}          
\item the difference between $\bsb{s}$ and $\bsb{s'}$ is $(1,-1,0)$,
\item the difference between $\bsb{s'}$ and $\bsb{t}$ is $(-1,1,0)$.
\end{itemize}
\end{Exa}

A consequence of Remark \ref{rev_01_rem_reverse}.2 is:
\begin{Rem}$ $
\label{heredit}
If $\bsb{s}$ and $\bsb{t}$ are close subexcedant sequences
and $m$ is an integer such that both $\bsb{u}=\mathrm{first}(m;0;\bsb{s})$ 
and $\bsb{v}=\mathrm{first}(m;0;\bsb{t})$ exist,
then $\bsb{u}$ and $\bsb{v}$ are also close.
\end{Rem}

\begin{The}
\label{main_th}
Two consecutive sequences in $S(k,n)$ generated by the algorithm
{\tt Gen1\_Gray} are close. 
\end{The}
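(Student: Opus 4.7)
My plan is to proceed by strong induction on $k$, the total weight to be distributed. The base case $k=0$ is vacuous, since the list generated then contains only one sequence.

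For the inductive step, observe that the output of any call {\tt Gen1\_Gray}$(k,r,dir)$ with $k\geq 1$ decomposes as the concatenation of the output lists of its recursive sub-calls: the distinguished call {\tt Gen1\_Gray}$(k-e,\ell,\cdot)$ at position $\ell$, together with the $r-\ell$ for-loop calls {\tt Gen1\_Gray}$(k-1,i,\cdot)$ at positions $i=\ell+1,\ldots,r$ (in an order depending on $dir$). Every such sub-call has weight parameter strictly smaller than $k$, so by the induction hypothesis consecutive elements inside any single sub-list are already close. What remains is to verify closeness at each boundary between consecutive sub-lists.

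To analyze these boundary pairs I would strengthen the induction so as to also carry explicit formulas for $\mathrm{first}(k;dir;\bsb c)$ and $\mathrm{last}(k;dir;\bsb c)$. Remark~\ref{rem_2_points}.2 already gives $\mathrm{first}(k;0;\bsb c)$ in terms of the minimum element $\mu(k;m)$ of $S(k,m)$ in co-lex order, which has the clean form $0,1,2,\ldots,j-2,\,e,\,0,\ldots,0$ with $j=\min\{s:s(s-1)/2\geq k\}$ and $e=k-(j-1)(j-2)/2$. The $dir=1$ variant, equal to $\mathrm{last}(k;0;\bsb c)$ by Remark~\ref{rem_2_points}.1, comes out of a symmetric unwinding of the $dir=1$ branch: when the for-loop is non-empty, $\mathrm{first}(k;1;\bsb c)=\mathrm{first}(k-1;0;\bsb c')$ where $\bsb c'$ is obtained from $\bsb c$ by incrementing $c_r$, and one then applies Remark~\ref{rem_2_points}.2 once more. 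With these formulas in hand, two junction types need to be inspected:
\begin{itemize}
\item[(A)] between the call at $\ell$ and the for-loop iteration at $\ell+1$ (or its $dir=1$ mirror), where the change is localized around positions $\ell-1,\ell,\ell+1$;
\item[(B)] between two consecutive for-loop iterations at $i$ and $i+1$, where the change consists of shifting a $1$ from position $i$ to position $i+1$ together with a small rearrangement at positions $\leq \ell$.
\end{itemize}
In either case one plugs the two boundary sequences into the formulas, reads off the difference $3$-tuple $(a_1,a_2,a_3)$, and checks by inspection that it lies in the set of Definition~\ref{3_tuple}.

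The hardest step will be case (A) when $e\geq 2$, or more specifically when the leftmost increasable position differs between weight $k-e$ and weight $k-1$ (which happens precisely when $k-1$ straddles a triangular number). In that situation the entire prefix $c_1\ldots c_\ell$ is rewritten, and what at first looks like a global change must be shown to collapse into a change of at most three adjacent entries; this is exactly where the asymmetric tuples $\pm(1,-2,1)$ and $\pm(1,-3,2)$ arise, as the residues of the collapse, and where the precise algebraic shape of $\mu(k;m)$ is essential. The remaining subcases (smaller $e$, consecutive for-loop iterations, $dir=1$ mirrors) reduce more mechanically to tuples of the forms $\pm(0,1,-1)$ and $\pm(0,2,-2)$.
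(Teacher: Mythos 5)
Your proposal follows essentially the same route as the paper's own proof: the paper locates the ``root call'' (the common ancestor at which the two consecutive outputs split into the last leaf of one sub-call and the first leaf of the next), distinguishes exactly your junction types --- $i=\ell$ with the two parities of $r-\ell$, and two adjacent for-loop iterations with the two direction patterns --- and verifies closeness by unwinding $\mathrm{last}$ and $\mathrm{first}$ through the co-lex-minimum formula of Remark~\ref{rem_2_points}.2 (packaged as Remark~\ref{heredit}), with the reversal property disposing of the $dir=1$ mirror. Your induction on the weight parameter is just a reformulation of that common-ancestor argument, and your diagnosis of where the asymmetric tuples $\pm(1,-2,1)$ and $\pm(1,-3,2)$ arise matches the paper's Cases 1 and 2.
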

\begin{proof}

Let $\bsb{s}$ and $\bsb{t}$ be two consecutive sequences 
generated by the call of {\tt Gen1\_Gray($k$,$n$,$0$)}.
Then there is a sequence $\bsb{c}=c_1c_2\ldots c_n$ and a
recursive call of {\tt Gen1\_Gray} acting on $\bsb{c}$
(referred later as the {\it root call} for $\bsb{s}$ and $\bsb{t}$) which produces,
in the {\tt for} statement, two calls so that $\bsb{s}$ is the 
last sequence produced by the first of them and $\bsb{t}$
the first produced by the second of them.  

By Remark \ref{rev_01_rem_reverse}.1 it is enough to prove that $\bsb{s}$ 
and $\bsb{t}$ are close when
their root call has direction $0$. 

Let $\ell$ and $r$, $\ell\neq r$, be the leftmost and the rightmost increasable positions
in $\bsb{c}$ (and so $c_1=c_2=\ldots =c_{r-1}=0$, and possibly $c_r=0$);
and $i$ and $i+1$ be the positions where
$\bsb{c}$ is modified by the root call in order to produce
eventually $\bsb{s}$ and $\bsb{t}$. Also we denote $m=k-\sum_{j=1}^n c_j$ and
$e=m-\frac{\ell\cdot (\ell-1)}{2}$.

We will give the shape of $\bsb{s}$ and $\bsb{t}$ according to
the following four cases.

\begin{enumerate}
\item $i=\ell$ and $r-\ell$ is even,
\item $i=\ell$ and $r-\ell$ is odd,
\item $i\neq\ell$ and the call corresponding to $i$ in the {\tt for}
      statement of the root call has direction $0$
      (and so that corresponding to $i+1$ has direction $1$),
\item $i\neq\ell$ and the call corresponding to $i$ in the {\tt for}
      statement of the root call has direction $1$
      (and so that corresponding to $i+1$ has direction $0$).
\end{enumerate}   

\noindent
Case 1.
\begin{eqnarray*}
\bsb{s} & = & \mathrm{last}(m-e;0;00\ldots e c_{\ell+1}\ldots c_n)\\
        & = & \mathrm{first}(m-e;1;00\ldots e c_{\ell+1}\ldots c_n)\\
	 & = & \left\{ \begin {array}{lcc}
                       \mathrm{first}  (m-e-(\ell-2);0;00\ldots (\ell-2)ec_{\ell+1} \ldots c_n) & {\rm if} & e=\ell-1\\
                       \mathrm{first}  (m-e-(\ell-2);0;00\ldots
		       (\ell-3)(e+1)c_{\ell+1}\ldots c_n) &    {\rm if} &
		       e<\ell-1,
                      \end {array}
                    \right.
\end{eqnarray*}
and
\begin{eqnarray*}
\bsb{t}  & = & \mathrm{first} (m-1;0;00\ldots (c_{\ell+1}+1)\ldots c_n)\\
         & = & \mathrm{first} (m-e;0;00\ldots (e-1)(c_{\ell+1}+1)\ldots c_n)\\
         & = & \mathrm{first} (m-e-(\ell-2);0;00\ldots (\ell-2)(e-1)(c_{\ell+1}+1)\ldots c_n). 
\end{eqnarray*}

\noindent
Case 2. In this case $\bsb{s}$ is the same as in the previous case and

\begin{eqnarray*}
\bsb{t} &
= & \mathrm{first}(m-1;1;00\ldots 0(c_{\ell+1}+1)\ldots c_n)  \\
&
= & \left\{ \begin {array}{lcc}
\mathrm{first}  (m-2;0;00\ldots 0     (c_{\ell+1}+2)\ldots c_n) & {\rm if} & c_{\ell+1}+2\leq \ell\\
\mathrm{first}  (m-e;0;00\ldots 0(e-1)(c_{\ell+1}+1)\ldots c_n) & {\rm if} & c_{\ell+1}+2>\ell
\end {array}
\right.\\
  & 
= & \left\{ \begin {array}{lcc}
\mathrm{first}  (m-e-(\ell-2);0;00\ldots 0(\ell-2)(e-2)(c_{\ell+1}+2)\ldots c_n) & {\rm if} & c_{\ell+1}+2\leq \ell\\
\mathrm{first}  (m-e-(\ell-2);0;00\ldots (\ell-2)(e-1)(c_{\ell+1}+1)\ldots c_n) &
{\rm if} & c_{\ell+1}+2>\ell.
\end {array}
\right.
\end{eqnarray*}

\noindent
Case 3. In this case $c_i=0$ and

\begin{eqnarray*}
\bsb{s}  & = & \mathrm{last} (m-1;0;00\ldots  01c_{i+1}\ldots c_n)\\
         & = & \mathrm{last} (m-2;1;00\ldots  02c_{i+1}\ldots c_n)\\ 
         & = & \mathrm{first} (m-2;0;00\ldots  02c_{i+1}\ldots c_n),	 
\end{eqnarray*}
and
\begin{eqnarray*}
\bsb{t}  & = & \mathrm{first} (m-1;1;00\ldots  0(c_{i+1}+1)\ldots c_n)\\
         & = & 
\left\{ \begin {array}{lcc}
\mathrm{first}  (m-2;0;00\ldots 0(c_{i+1}+2)\ldots c_n) & {\rm if} & c_{i+1}+2\leq i\\
\mathrm{first}  (m-2;0;00\ldots 1(c_{i+1}+1)\ldots c_n) & {\rm if} & c_{i+1}+2>    i.
\end {array}
\right.	 
\end{eqnarray*}

\noindent
Case 4. As previously, $c_i=0$ and

\begin{eqnarray*}
\bsb{s}  & = & \mathrm{last} (m-1;1;00\ldots  01c_{i+1}\ldots c_n)\\
         & = & \mathrm{first} (m-1;0;00\ldots 01c_{i+1}\ldots c_n), 
\end{eqnarray*}
and

$$\bsb{t}=\mathrm{first} (m-1;0;00\ldots 00(c_{i+1}+1)\ldots c_n).
$$
Finally, by Remark \ref{heredit} it follows that in each of the 
four cases $\bsb{s}$ and $\bsb{t}$ 
are close, and the statement holds.
\end{proof}

As a byproduct of the previous theorem and Remark \ref{rem_2_points}.2 we have

\begin{Rem}
\label{boure}
If $\bsb{s}=s_1s_2\ldots s_n$ and 
$\bsb{t}=t_1t_2\ldots t_n$ are two consecutive sequences generated by 
{\tt Gen1\_Gray} and $p$ is the rightmost position where  
they differ, then 
$s_1s_2\ldots s_{p-2}$ and $t_1t_2\ldots t_{p-2}$ are the smallest, 
in co-lex order, sequences in $S(x,p-2)$ and $S(y,p-2)$, 
respectively, with $x=s_1+s_2+\ldots +s_{p-2}$ and 
$y=t_1+t_2+\ldots +t_{p-2}$.
Remark that $s_1s_2\ldots s_{p-2}=t_1t_2\ldots t_{p-2}$,
and so $x=y$, if $\bsb{s}$ and $\bsb{t}$ differ in two (adjacent) positions.
\end{Rem}

\subsection{Algorithm {\tt Gen2\_Gray}}

\begin{figure}
\begin{center}
{
\psset{levelsep=18mm,treesep=3mm,radius=-0.5mm}
\pstree[nodesep=1mm,treemode=R]{\TR{$\phantom{000}\cdots$}}
{ \pstree{\TR{  $0000401$}}
  { \pstree{\TR{ $0003401$}}
    { \pstree{\TR{ $0023401$}}
      {\pstree{\TR{ $0123401$}}{}
      }
    }
  }
}
}
\end{center}
\caption{\label{q_terminal_n}
Four successive q-terminal nodes in the generating tree
induced by the call {\tt Gen1\_Gray}(11,7,0) which generates
the list $\mathcal{S}(11,7)$.}
\end{figure}

Since the generating tree induced by the call of {\tt Gen1\_Gray} 
contains still arbitrary length branches of 
nodes of degree one it has a poor time complexity.
Here we show how some of these nodes can be avoided in order to obtain the 
efficient generating algorithm {\tt Gen2\_Colex}
presented in Figure \ref{Algo_Gen2Gray}.

A {\it quasi-terminal node} ({\it q-terminal node} for short)
in the tree induced by a generating algorithm
is defined recursively as:
a q-terminal node is either a terminal node 
(node with no successor) or a node with only
one successor which in turn is a q-terminal node.
The q-terminal nodes occur for the calls of {\tt Gen1\_Gray($k,r,dir$)}
when $k=\frac{r(r-1)}{2}$.
See Figure~\ref{q_terminal_n} for an example.

The key improvement made by {\tt Gen2\_Gray} consists in  its last 
parameter $p$, which gives 
the rightmost position where the current sequence differ from its previous 
one in the list $\mathcal{S}(k,n)$, and {\tt Gen2\_Gray}  
stops the recursive calls of more than three
successive q-terminal calls.
Thus, {\tt Gen2\_Gray} generates
only suffixes of the form $c_{p-2}c_{p-1}c_{p}\ldots c_n$;
see Table \ref{list_pref} for an example.
Since two consecutive sequences in the Gray code $\mathcal{S}(k,n)$
differ in at most three adjacent positions, these suffixes 
are enough to generate efficiently $\mathcal{S}(k,n)$,
and to generate (in Section \ref{gen_perms}) a Gray code for the set of length 
$n$ permutations having the major index equal to $k$.

Now we explain how the parameter $p$ propagates through recursive calls.
A non terminal call of {\tt Gen2\_Gray} produces one or several calls. 
The first of them (corresponding to a left child
in the generating tree) inherits the value of the parameter 
$p$ from its parent call; in the other calls the value of this parameter 
is the rightmost position where the current sequence
differs from its previous generated one; this value is $i$ if $dir=0$ and 
$i+1$ if $dir=1$. So, each call keeps in the last 
parameter $p$ the rightmost position where the current generated sequence 
differs from its previous one in the list ${\mathcal S}(k,n)$.
Procedure {\tt Gen2\_Gray} prevents to produce more than three successive
q-terminal calls. For convenience, initially $p=0$.

The last two parameters $p$ and $u$ of procedure {\tt Gen2\_Gray} and
output by it are used by procedure 
{\tt Update\_Perm} in Section \ref{gen_perms} in order to generates 
permutations with a given major index; $u$ keeps the value
of $c_1+c_2+\ldots +c_p$, and for convenience, initially $u=0$.

\medskip
Even we will not make use later we 
sketch below an algorithm for efficiently
generating the list ${\mathcal S}(k,n)$:
   
\begin{itemize}
\item initialize $\bsb{d}$ by the first sequence in $\mathcal{S}(k,n)$,
      i.e, the the smallest sequence in $S(k,n)$
      in co-lex order, or equivalently, the largest one in lexicographical orders,
      and  $\bsb{c}$ by $0\,0\,\ldots\, 0$,
\item run {\tt Gen2\_Gray($k,n,0,0,0)$} and  for each $p$ output by it
      update $\bsb{d}$ as: $d[p-2]:=c[p-2]$, $d[p-1]:=c[p-1]$, $d[p]:=c[p]$. 
\end{itemize}

\begin{figure}
\begin{tabular}{|c|}
\hline
\small\tt
\begin{minipage}[c]{0.460\linewidth}
\begin{tabbing}
\hspace{1.cm}\=\hspace{0.9cm}\=\hspace{0.7cm}\=\hspace{1.9cm}
            \=\hspace{1.5cm}\kill
procedure Gen2\_Gray($k$,$r$,$dir$,$p$,$u$)\\
global $n,c,b$;\\
if $k=0$ or $(p-r)\geq 3$ and $k=\frac{r(r-1)}{2}$\\
then output($p,u$);\\
else \> if $c[r]=r-1$ \\
     \>then $r:=r-1$; \\
     \>end if\\     
     \> $\ell:=\min\{s\,|\,\frac{s(s-1)}{2} \geq k\}$;\\
     \> $e:=k-\frac{(\ell-1)(\ell-2)}{2}$;\\ 
     \> if $dir=0$ \\
     \> then \> $c[\ell]:=c[\ell]+e$; Gen2\_Gray($k-e$,$\ell$,$0$,$p$,$u$);
     $c[\ell]:=c[\ell]-e$;\\
     \>      \> $dir:=(r-\ell) \mod 2$;\\
     \>      \> for $i:=\ell+1$ to $r$ do\\
     \>      \>  \>$c[i]:=c[i]+1$; Gen2\_Gray($k-1$,$i$,$dir$,$i$,$k-1+c[i]$); $dir:=(dir+1)\mod 2$; $c[i]:=c[i]-1$;\\
     \>      \>  end do\\
     \> else \> $dir:=0$;\\
     \>      \> for $i:=r$ downto $\ell+1$ do\\  
     \>      \> \> if $i=r$ then $q:=p$; $v:=u$; else $q=:i+1$; $v:=c[i+1]+k$; end if\\
     \>      \> \> $c[i]:=c[i]+1$; Gen2\_Gray($k-1$,$i$,$dir$,$q$,$v$); $dir:=(dir+1)\mod 2$; $c[i]:=c[i]-1$;\\
     \>      \>  end do\\
     \>      \> if $\ell=r$ then $q:=p$; $v:=u$; else $q:=\ell+1$; $v:=c[\ell+1]+k$; end if\\    
     \>      \> $c[\ell]:=c[\ell]+e$; Gen2\_Gray($k-e$,$\ell$,$1$,$q$,$v$);
     $c[\ell]:=c[\ell]-e$;\\
     \> end if\\
end if\\
end procedure.
\end{tabbing}
\end{minipage}
\\ \hline
\end{tabular}
\caption{\label{Algo_Gen2Gray}
Algorithm  {\tt Gen2\_Gray}.}
\end{figure}


\begin{table}
\begin{center}
\begin{tabular}{|r|c|c||r|c|c|}
\hline
sequence & $p$ & permutation & sequence & $p$ & permutation\\
\hline
$0\,1\,2\,1\,0\,0$ &     & $2\,1\,4\,3\,5\,6$ &  $0\,1\,\underline{0\,0\,1}\,2$ & $5$ & $5\,3\,6\,1\,2\,4$ \\
$0\,\underline{1\,0\,3}\,0\,0$ & $4$ & $3\,2\,4\,1\,5\,6$ &  $\underline{0\,0\,1}\,0\,1\,2$ & $3$ & $6\,3\,5\,1\,2\,4$ \\
$\underline{0\,0\,1}\,3\,0\,0$ & $3$ & $4\,2\,3\,1\,5\,6$ &  $0\,\underline{0\,0\,1}\,1\,2$ & $4$ & $1\,3\,5\,6\,2\,4$ \\
$0\,\underline{0\,2\,2}\,0\,0$ & $4$ & $4\,1\,3\,2\,5\,6$ &  $0\,0\,\underline{0\,0\,2}\,2$ & $5$ & $2\,3\,5\,6\,1\,4$ \\
$\underline{0\,1\,1}\,2\,0\,0$ & $3$ & $3\,1\,4\,2\,5\,6$ &  $0\,0\,0\,\underline{0\,0\,4}$ & $6$ & $3\,4\,5\,6\,1\,2$ \\     
$0\,1\,\underline{2\,0\,1}\,0$ & $5$ & $2\,1\,5\,3\,4\,6$ &  $0\,0\,0\,\underline{0\,1\,3}$ & $6$ & $2\,4\,5\,6\,1\,3$\\
$0\,\underline{1\,1\,1}\,1\,0$ & $4$ & $3\,1\,5\,2\,4\,6$ &  $0\,0\,\underline{0\,1\,0}\,3$ & $5$ & $1\,4\,5\,6\,2\,3$ \\
$\underline{0\,0\,2}\,1\,1\,0$ & $3$ & $5\,1\,3\,2\,4\,6$ &  $0\,\underline{0\,1\,0}\,0\,3$ & $4$ & $6\,4\,5\,1\,2\,3$ \\
$0\,\underline{0\,0\,3}\,1\,0$ & $4$ & $1\,2\,3\,5\,4\,6$ &  $\underline{0\,1\,0}\,0\,0\,3$ & $3$ & $5\,4\,6\,1\,2\,3$ \\
$0\,\underline{0\,1\,2}\,1\,0$ & $4$ & $5\,2\,3\,1\,4\,6$ &  $0\,1\,0\,\underline{0\,2\,1}$ & $6$ & $4\,3\,6\,1\,2\,5$ \\   
$\underline{0\,1\,0}\,2\,1\,0$ & $3$ & $3\,2\,5\,1\,4\,6$ &  $\underline{0\,0\,1}\,0\,2\,1$ & $3$ & $6\,3\,4\,1\,2\,5$ \\    
$0\,1\,\underline{0\,0\,3}\,0$ & $5$ & $4\,3\,5\,1\,2\,6$ &  $0\,\underline{0\,0\,1}\,2\,1$ & $4$ & $1\,3\,4\,6\,2\,5$ \\ 
$\underline{0\,0\,1}\,0\,3\,0$ & $3$ & $5\,3\,4\,1\,2\,6$ &  $0\,0\,\underline{0\,0\,3}\,1$ & $5$ & $2\,3\,4\,6\,1\,5$ \\  
$0\,\underline{0\,0\,1}\,3\,0$ & $4$ & $1\,3\,4\,5\,2\,6$ &  $0\,0\,\underline{0\,2\,1}\,1$ & $5$ & $1\,2\,4\,6\,3\,5$ \\ 
$0\,0\,\underline{0\,0\,4}\,0$ & $5$ & $2\,3\,4\,5\,1\,6$ &  $0\,\underline{0\,1\,1}\,1\,1$ & $4$ & $6\,2\,4\,1\,3\,5$  \\
$0\,0\,\underline{0\,2\,2}\,0$ & $5$ & $1\,2\,4\,5\,3\,6$ &  $\underline{0\,1\,0}\,1\,1\,1$ & $3$ & $4\,2\,6\,1\,3\,5$ \\
$0\,\underline{0\,1\,1}\,2\,0$ & $4$ & $5\,2\,4\,1\,3\,6$ &  $0\,\underline{0\,2\,0}\,1\,1$ & $4$ & $6\,1\,4\,2\,3\,5$ \\
$\underline{0\,1\,0}\,1\,2\,0$ & $3$ & $4\,2\,5\,1\,3\,6$ &  $\underline{0\,1\,1}\,0\,1\,1$ & $3$ & $4\,1\,6\,2\,3\,5$\\
$0\,\underline{0\,2\,0}\,2\,0$ & $4$ & $5\,1\,4\,2\,3\,6$ &  $0\,1\,\underline{1\,1\,0}\,1$ & $5$ & $3\,1\,6\,2\,4\,5$ \\
$\underline{0\,1\,1}\,0\,2\,0$ & $3$ & $4\,1\,5\,2\,3\,6$ &  $\underline{0\,0\,2}\,1\,0\,1$ & $3$ & $6\,1\,3\,2\,4\,5$ \\
$0\,1\,1\,\underline{0\,0\,2}$ & $6$ & $5\,1\,6\,2\,3\,4$ &  $0\,\underline{0\,0\,3}\,0\,1$ & $4$ & $1\,2\,3\,6\,4\,5$\\  
$\underline{0\,0\,2}\,0\,0\,2$ & $3$ & $6\,1\,5\,2\,3\,4$ &  $0\,\underline{0\,1\,2}\,0\,1$ & $4$ & $6\,2\,3\,1\,4\,5$\\
$0\,\underline{0\,0\,2}\,0\,2$ & $4$ & $1\,2\,5\,6\,3\,4$ &  $\underline{0\,1\,0}\,2\,0\,1$ & $3$ & $3\,2\,6\,1\,4\,5$ \\
$0\,\underline{0\,1\,1}\,0\,2$ & $4$ & $6\,2\,5\,1\,3\,4$ &  $0\,\underline{1\,2\,0}\,0\,1$ & $4$ & $2\,1\,6\,3\,4\,5$ \\ 
$\underline{0\,1\,0}\,1\,0\,2$ & $3$ & $5\,2\,6\,1\,3\,4$ &                  &  &  \\ 
  \hline
\end{tabular}
\end{center}
\caption{
\label{list_pref}The subexcedant sequences generated by the call of {\tt Gen1\_Gray($4,6,0$)}
and their corresponding length $6$ permutations with major index equals $4$,
permutations descent set is either $\{1,3\}$ or $\{4\}$.
The three leftmost entries ($c_{p-2}$,$c_{p-1}$,$c_p$) updated by the call of 
{\tt Gen2\_Gray($4,6,0,0,0$)} are underlined, where
$p$ is the rightmost position  where a subexcedant
sequence differ from its predecessor.
}
\end{table}

\subsubsection*{Analyze of {\tt Gen2\_Gray}}

For a call of {\tt Gen2\_Gray($k$,$r$,$dir$,$p$,$u$)}
necessarily $k\leq\frac{r(r-1)}{2}$, and 
if $k>0$ and
\begin{itemize}
\item $k\leq \frac{(r-1)(r-2)}{2}$, then this call
      produces at least two recursive calls,
\item $\frac{(r-1)(r-2)}{2}<k<\frac{r(r-1)}{2}$,
      then this call produces a unique recursive call
      (of the form {\tt Gen2\_Gray($k'$,$r$,$\cdot$,$\cdot$,$\cdot$)},
      with $k'=k-\frac{(r-1)(r-2)}{2}$),
      which in turn produce two calls,
\item $k=\frac{r(r-1)}{2}$, then this call is q-terminal call.
\end{itemize}
Sine the procedure {\tt Gen2\_Gray} stops after three successive 
q-terminal calls, with a slight modification of Ruskey and van Baronaigien's \cite{Roe_93}
{\it `CAT'} principle (see also \cite{Rus_00}) it follows that {\tt Gen2\_Gray} runs in constant 
amortized time.

\section{The McMahon code of a permutation}
\label{sec_Mc_code}

Here we present the bijection 
$\psi:S(n)\rightarrow \SS_n$, introduced in \cite{Vaj_11}, which have the following properties:
\begin{itemize}
\item the image through $\psi$ of $S(k,n)$ is the set of permutations in $\SS_n$ with 
major index $k$,
\item $\psi$ is a `Gray code preserving bijection' (see Theorem \ref{sigma_t_a}),
\item $\tau$ is easily computed from $\sigma$ and from the difference between $\bsb{s}$ and $\bsb{t}$,
the McMahon code of  $\sigma$ and $\tau$, if $\bsb{s}$ and $\bsb{t}$ are close.
\end{itemize}
In the next section we apply $\psi$ in order
to construct a list for the permutations in $\SS_n$ with a major index 
equals $k$ from the Gray code list $\mathcal{S}(k,n)$.

Let permutations act on indices, i.e., for 
$\sigma=\sigma_1\,\sigma_2\, \ldots \,\sigma_n$ and 
$\tau=\tau_1\,\tau_2\, \ldots \,\tau_n$
two permutations in $\SS_n$, 
$\sigma\cdot\tau=\sigma_{\tau_1}\,\sigma_{\tau_2}\, \ldots \,\sigma_{\tau_n}$.
For a fixed integer $n$, let $k$ and $u$ be two integers, $0\leq k<u\leq n$,
and define
$\llsq u,k \rrsq\in\SS_n$ as the permutation obtained after 
$k$ right circular shifts of the length-$u$ prefix of the identity
in $\SS_n$. 
In two line notation

$$
\llsq u,k \rrsq=
\left(
\begin{array}{cccccccccc}
1     & 2   & \cdots & k & k+1   & \cdots   & u   & u+1     &\cdots & n   \\
u-k+1 & u-k+2 & \cdots & u & 1   &\cdots    & u-k & u+1     &\cdots & n 
\end{array}
\right).
$$

For example, in $\SS_5$ we have: 
$\llsq 3,1\rrsq=\underline{3\,1\,2}\,4\,5$, 
$\llsq 3,2\rrsq=\underline{2\,3\,1}\,4\,5$  and
$\llsq 5,3\rrsq=\underline{3\,4\,5\,1\,2}$ (the rotated elements are underlined).

\medskip
Let 
$\psi:S(n)\rightarrow \SS_n
$
be the function defined by 

\begin{equation}
\label{def_psi}
\begin{array}{ccl}
\psi(t_1t_2\ldots t_n) 
  & = & \llsq n,t_n\rrsq\cdot \llsq n-1,t_{n-1}\rrsq\cdot\ldots\cdot \llsq i,t_i\rrsq\cdot  
      \ldots \cdot\llsq 2,t_2\rrsq\cdot \llsq 1,t_1 \rrsq \\
  & = & \displaystyle \prod_{i=n}^1\llsq i,t_i\rrsq.
\end{array}
\end{equation}

\begin{Lem}[\cite{Vaj_11}]$ $
\begin{enumerate}
\item 
The function $\psi$ defined above is a bijection.
\item
For every $\bsb{t}=t_1t_2\ldots t_n\in S(n)$, we have 
$\maj \prod_{i=n}^1\llsq i,t_i\rrsq=\sum_{i=1}^nt_i$.
\end{enumerate}
\end{Lem}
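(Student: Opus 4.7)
The plan is induction on $n$. Write $\tau=\prod_{i=n-1}^{1}\llsq i,t_i\rrsq$; since each factor fixes position $n$, so does $\tau$, and we can peel off the leftmost factor:
$$\psi(t_1t_2\ldots t_n)=\llsq n,t_n\rrsq\cdot\tau.$$

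For part~1, since $|S(n)|=n!=|\SS_n|$ it suffices to recover $\bsb t$ from $\sigma=\psi(\bsb t)$. Because $\tau_n=n$ and the two-line form of $\llsq n,t_n\rrsq$ sends $n$ to $n-t_n$, one computes
$$\sigma_n=\llsq n,t_n\rrsq_{\tau_n}=\llsq n,t_n\rrsq_n=n-t_n,$$
so $t_n=n-\sigma_n$ is forced. Then $\tau=\llsq n,t_n\rrsq^{-1}\cdot\sigma$ also fixes $n$, hence restricts to an element of $\SS_{n-1}$, and by induction $t_1,\ldots,t_{n-1}$ are determined uniquely.

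For part~2 the heart of the argument is the following \emph{key lemma}: for every $\tau\in\SS_n$ with $\tau_n=n$ and every $k$ with $0\leq k\leq n-1$,
$$\maj(\llsq n,k\rrsq\cdot\tau)=\maj\tau+k.$$
Granting this and iterating over the $n$ factors, induction yields $\maj\psi(\bsb t)=t_n+\maj\tau=\sum_{i=1}^n t_i$.

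To prove the key lemma I would call a value $x$ \emph{small} if $x\leq k$ and \emph{large} otherwise, and set $\sigma=\llsq n,k\rrsq\cdot\tau$. The two-line form of $\llsq n,k\rrsq$ gives $\sigma_j=c(\tau_j)$, where the cyclic shift $c$ sends large values order-preservingly to $\{1,\ldots,n-k\}$ and small values order-preservingly to $\{n-k+1,\ldots,n\}$. A case check on $(\tau_i,\tau_{i+1})$ then shows: at a position $1\leq i\leq n-2$ a descent in $\sigma$ coincides with a descent in $\tau$ whenever both are small or both large, is gained when small precedes large, and is lost when large precedes small; and at $i=n-1$, $\sigma$ has a descent iff $\tau_{n-1}$ is small (while $\tau$ has none there, since $\tau_n=n$). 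Setting $f_i=1$ if $\tau_i$ is large and $0$ otherwise, so that $f_n=1$ and $\sum_{i=1}^{n-1}f_i=n-1-k$, this bookkeeping packages into
$$\maj\sigma-\maj\tau=\sum_{i=1}^{n-1}i\,(f_{i+1}-f_i),$$
and summation by parts collapses the right side to $(n-1)f_n-\sum_{i=1}^{n-1}f_i=(n-1)-(n-1-k)=k$.

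The main obstacle is the descent bookkeeping at the boundary $i=n-1$: $\tau$ cannot have a descent there, but $\sigma$ may acquire one from the cyclic relabeling. The payoff of encoding the small/large pattern into a single binary vector $(f_1,\ldots,f_n)$ with $f_n=1$ is precisely that this boundary descent is absorbed into the telescoping expression, after which the computation is essentially Abel's summation.
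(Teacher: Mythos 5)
Your proof is correct. Note, however, that the paper does not prove this lemma at all: it is stated with the citation [Vaj\_11] and imported as a known result, so there is no in-paper argument to compare against. Your write-up therefore supplies a proof the paper deliberately omits. The structure you chose is the natural one: peeling off the leftmost factor $\llsq n,t_n\rrsq$ and using that $\tau=\prod_{i=n-1}^{1}\llsq i,t_i\rrsq$ fixes $n$ gives both the recursive recovery $t_n=n-\sigma_n$ for part~1 and the inductive step for part~2. Your key lemma is stated with exactly the hypothesis it needs --- the identity $\maj(\llsq n,k\rrsq\cdot\tau)=\maj\tau+k$ genuinely requires $\tau_n=n$ (or at least $\tau_n>k$), since the telescoped sum is $(n-1)f_n-\sum_{i=1}^{n-1}f_i$ and the boundary value $f_n$ matters --- and the descent bookkeeping $[\mathrm{des}_\sigma(i)]-[\mathrm{des}_\tau(i)]=f_{i+1}-f_i$ checks out in all four small/large cases, including $i=n-1$ (which, as you observe, is really just the general case because $\tau_n=n$ is large). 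The Abel summation and the count $\sum_{i=1}^{n-1}f_i=n-1-k$ are also correct, and the value matches the paper's worked example ($\maj(5\,2\,1\,6\,4\,3)=12=0+1+2+2+4+3$). One micro-gap you may want to make explicit: in the induction for part~2 you implicitly identify $\maj\tau$ computed in $\SS_n$ with the major index of its restriction to $\SS_{n-1}$; this is immediate because $\tau_n=n$ forbids a descent at position $n-1$, but it deserves a sentence.
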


The first point of the previous lemma says that every permutation 
$\pi\in\SS_n$ can be uniquely written as
$\prod_{i=n}^1\llsq i,t_i\rrsq$ for some $t_i$'s, and 
the subexcedant sequence $t_1t_2\ldots t_n$ is called the 
{\it McMahon code} of $\pi$. 
As a consequence of the second point of this lemma we have:

\begin{Rem}
The restriction of $\psi$ maps bijectively permutations in 
$S(k,n)$ into permutations in $\SS_n$ with major index equals $k$.
\end{Rem}

\begin{Exa}
The permutation $\pi =5\,2\,1\,6\,4\,3\in \SS_n$ can be obtained from
the identity by the following prefix rotations:
$$1\,2\,3\,4\,5\,6
  \overset{\llsq 6,3 \rrsq}{\longrightarrow}
  4\,5\,6\,1\,2\,3
  \overset{\llsq 5,4 \rrsq}{\longrightarrow}
  5\,6\,1\,2\,4\,3
  \overset{\llsq 4,2 \rrsq}{\longrightarrow}
  1\,2\,5\,6\,4\,3
  \overset{\llsq 3,2 \rrsq}{\longrightarrow}
  2\,5\,1\,6\,4\,3
  \overset{\llsq 2,1 \rrsq}{\longrightarrow}
  5\,2\,1\,6\,4\,3
  \overset{\llsq 1,0 \rrsq}{\longrightarrow}
  5\,2\,1\,6\,4\,3,
$$
so 

$$
\pi=
\llsq 6,3\rrsq\cdot\llsq 5,4 \rrsq\cdot\llsq 4,2 \rrsq\cdot\llsq 3,2 
\rrsq\cdot\llsq 2,1 \rrsq\cdot\llsq 1,0 \rrsq,$$
and thus
$$
\maj\ \pi =3+4+2+2+1+0=12.
$$
\end{Exa}

Theorem \ref{sigma_t_a} below states that if two permutations have 
their McMahon code differing in two adjacent positions, and by 
$1$ and $-1$ in these positions, then these permutations differ by the transposition 
of two entries.
Before proving this theorem we need the following 
two propositions,
where the transposition $\langle u, v\rangle$  denote the permutation 
$\pi$ (of convenient length) with $\pi(i)=i$ for all $i$, except $\pi(u)=v$
and $\pi(v)=u$.

\begin{Pro}
\label{first_trans}
Let $n,u$ and $v$ be three integers,
$n\geq 3$, $0\leq u\leq n-2$, $1\leq v\leq n-2$, and
$\sigma,\tau\in\SS_n$ defined by:

\begin{itemize}
\item $\sigma= \llsq n,u\rrsq\  \cdot \llsq n-1,v\rrsq$, and
\item $\tau  = \llsq n,u+1\rrsq \cdot \llsq n-1,v-1\rrsq$.
\end{itemize}
Then 
$$
\tau=\sigma\cdot\langle n, v\rangle.
$$
\end{Pro}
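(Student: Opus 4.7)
The plan is to prove the equivalent statement $\sigma^{-1}\cdot\tau = \langle n, v\rangle$, exploiting the fact that each $\llsq n, k\rrsq$ is a cyclic shift of the entire length-$n$ prefix, so the family $\{\llsq n, k\rrsq : 0\le k<n\}$ commutes and composes additively modulo $n$.

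First I would expand, using $(AB)^{-1}=B^{-1}A^{-1}$,
\[
\sigma^{-1}\cdot\tau \;=\; \llsq n-1, v\rrsq^{-1}\cdot\llsq n, u\rrsq^{-1}\cdot\llsq n, u+1\rrsq\cdot\llsq n-1, v-1\rrsq.
\]
Since $\llsq n, u\rrsq^{-1}=\llsq n, n-u\rrsq$ (interpreting $n-u$ mod $n$), the middle two factors collapse to $\llsq n,\,(n-u)+(u+1)\bmod n\rrsq = \llsq n, 1\rrsq$, yielding
\[
\sigma^{-1}\cdot\tau \;=\; \llsq n-1, v\rrsq^{-1}\cdot\llsq n, 1\rrsq\cdot\llsq n-1, v-1\rrsq.
\]

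The heart of the argument is then the auxiliary identity
\[
\llsq n, 1\rrsq\cdot\llsq n-1, v-1\rrsq \;=\; \llsq n-1, v\rrsq\cdot\langle n, v\rangle,
\]
which I would verify by direct computation of one-line notations. Applying $\llsq n-1, v-1\rrsq$ first cyclically shifts the $(n-1)$-prefix by $v-1$ and fixes position $n$; in particular it sends $v\mapsto 1$ and $n\mapsto n$. Then $\llsq n, 1\rrsq$, which sends $1\mapsto n$ and $j\mapsto j-1$ for $j\ge 2$, produces a composition that agrees with $\llsq n-1, v\rrsq$ on every position other than $v$ and $n$, while the values $n$ and $n-1$ landing in positions $v$ and $n$ get swapped. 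This is precisely the effect of post-composing $\llsq n-1, v\rrsq$ by $\langle n, v\rangle$. Substituting this identity back, the factors $\llsq n-1, v\rrsq^{-1}$ and $\llsq n-1, v\rrsq$ cancel and we obtain $\sigma^{-1}\cdot\tau = \langle n, v\rangle$, as required.

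The main obstacle is the bookkeeping in the auxiliary identity: one must partition $\{1,\ldots,n\}$ into the three ranges $[1, v-1]$, $[v, n-1]$, $\{n\}$ and check for each range that the two sides agree. The hypotheses $n\ge 3$ and $1\le v\le n-2$ guarantee that none of these ranges is empty, so all cases are genuine and no edge case derails the argument.
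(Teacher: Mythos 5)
Your proof is correct, and it reaches the conclusion by a genuinely different organization of the computation than the paper. The paper simply compares the one-line notations of $\sigma$ and $\tau$ directly: since $\llsq n,u+1\rrsq$ is one extra right circular shift of the full prefix and $\llsq n-1,v-1\rrsq$ is one fewer right shift (i.e.\ a left shift) of the $(n-1)$-prefix of $\llsq n-1,v\rrsq$, the two effects cancel everywhere except at positions $v$ and $n$, whence $\sigma$ and $\tau$ differ by exactly the transposition of those two positions. You instead form $\sigma^{-1}\cdot\tau$, use the cyclic-group structure of the full-prefix shifts to collapse $\llsq n,u\rrsq^{-1}\cdot\llsq n,u+1\rrsq$ to $\llsq n,1\rrsq$, and reduce everything to the $u$-independent identity $\llsq n,1\rrsq\cdot\llsq n-1,v-1\rrsq=\llsq n-1,v\rrsq\cdot\langle n,v\rangle$, which you then check entrywise (I verified this identity; it holds, and the ranges you track give exactly the values $n-1-v+i$, $n$, $i-v$, $n-1$ on both sides). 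The entrywise bookkeeping in your auxiliary identity is essentially the same computation the paper performs on $\sigma$ versus $\tau$; what your route buys is that it makes the disappearance of $u$ explicit and replaces the implicit step ``two permutations agreeing except in two positions differ by a transposition'' with a clean algebraic cancellation of $\llsq n-1,v\rrsq^{-1}$ against $\llsq n-1,v\rrsq$. One small inaccuracy in your closing remark: when $v=1$ the range $[1,v-1]$ is empty, contrary to your claim that the hypotheses keep all three ranges nonempty; this is harmless, since an empty range simply means there is nothing to check there.
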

\begin{proof}
First, remark that:

\begin{itemize}
\item $\llsq n,u+1\rrsq$, is a right circular shift of $\llsq n,u\rrsq$, and
\item $\llsq n-1,v-1\rrsq$ is a left circular shift of 
the first $(n-1)$ entries of $\llsq n-1,v\rrsq$,
\end{itemize}
and so $\sigma(i)=\tau(i)$ for all $i$, $1\leq i\leq n$, except for 
$i=n$ and $i=v$.
\end{proof}

\begin{Exa}
For $n=7$, $u=4$ and $v=3$ we have
\begin{itemize}
\item $\sigma=\llsq n,u  \rrsq\cdot \llsq n-1,v  \rrsq=\llsq 7,4\rrsq\cdot \llsq
6,3\rrsq=7\,1\,2\,4\,5\,6\,3$,
\item $\tau=  \llsq n,u+1\rrsq\cdot \llsq n-1,v-1\rrsq=\llsq 7,5\rrsq\cdot \llsq 6,2\rrsq=
7\,1\,3\,4\,5\,6\,2$,
\item $\langle n,v\rangle=\langle 7,3\rangle$, 
\end{itemize}
and $\tau=\sigma\cdot \langle n,v\rangle$.
\end{Exa}

\begin{Pro}
\label{before_th}
If $\pi\in\SS_n$ and $\langle u,v\rangle$ is a transposition in 
$\SS_n$, then 

$$\pi^{-1}\cdot \langle u,v\rangle\cdot\pi=
\langle\pi^{-1}(u),\pi^{-1}(v)\rangle.$$
\end{Pro}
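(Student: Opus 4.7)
The plan is to prove this identity by direct pointwise evaluation of the composition on each index $i\in\{1,2,\ldots,n\}$, using the convention (stated just above equation \eqref{def_psi}) that $(\sigma\cdot\tau)(i)=\sigma(\tau(i))$. So I would let $\rho=\pi^{-1}\cdot\langle u,v\rangle\cdot\pi$ and compute $\rho(i)=\pi^{-1}\bigl(\langle u,v\rangle(\pi(i))\bigr)$, then show that the resulting map coincides with the transposition $\langle\pi^{-1}(u),\pi^{-1}(v)\rangle$.

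The case analysis splits naturally according to the value of $\pi(i)$. First, if $i=\pi^{-1}(u)$, so $\pi(i)=u$, then $\langle u,v\rangle(\pi(i))=v$ and hence $\rho(i)=\pi^{-1}(v)$. Symmetrically, if $i=\pi^{-1}(v)$, then $\rho(i)=\pi^{-1}(u)$. In all remaining cases we have $\pi(i)\notin\{u,v\}$, so $\langle u,v\rangle$ fixes $\pi(i)$ and $\rho(i)=\pi^{-1}(\pi(i))=i$. Since $\pi$ (and hence $\pi^{-1}$) is a bijection, $\pi^{-1}(u)\neq\pi^{-1}(v)$, and these three cases are mutually exclusive and exhaustive. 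The map $\rho$ therefore swaps exactly the two indices $\pi^{-1}(u)$ and $\pi^{-1}(v)$ and fixes all others, which is precisely the transposition $\langle\pi^{-1}(u),\pi^{-1}(v)\rangle$.

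There is no real obstacle here; the statement is just the well-known fact that conjugation acts on cycle data by relabelling, specialized to a $2$-cycle, and the only subtlety is to keep the left-to-right composition convention of the paper straight so that $\pi^{-1}$ (and not $\pi$) appears on the labels of the resulting transposition. The proof is a two- or three-line verification, and I would write it as such.
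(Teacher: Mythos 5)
Your proof is correct and follows essentially the same route as the paper, which likewise verifies the identity pointwise by noting that $(\pi^{-1}\cdot\langle u,v\rangle\cdot\pi)(i)=i$ for all $i$ except $i=\pi^{-1}(u)$ and $i=\pi^{-1}(v)$. You simply spell out the three-case check (and the composition convention) that the paper leaves implicit.
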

\begin{proof}
Indeed, $(\pi^{-1}\cdot \langle u,v\rangle\cdot\pi)(i)=i$,
for all $i$, except for $i=\pi^{-1}(u)$ and $i=\pi^{-1}(v)$.
\end{proof}

\begin{The}
\label{sigma_t_a}
Let $\sigma$ and $\tau$ be two permutations in $\SS_n $, $n\geq 3$, and 
$\bsb{s}=s_1s_2\ldots s_n$ and $\bsb{t}=t_1t_2\ldots t_n$
their McMahon codes. If there is a $f$, $2\leq f\leq n-1$
such that $t_i=s_i$ for all $i$, except $t_f=s_f-1$
and $t_{f+1}=s_{f+1}+1$, then $\tau$ and $\sigma$ differ by a 
transposition. More precisely, 
$$
\tau=\sigma \cdot \langle \alpha^{-1}(u), \alpha^{-1}(v)\rangle
$$
where 
$$
\alpha=\prod_{i=f-1}^{1}\llsq i,s_i  \rrsq=\prod_{i=f-1}^{1}\llsq i,t_i  \rrsq,
$$
and $u=f+1$, $v=s_f$.
\end{The}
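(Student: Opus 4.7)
The plan is to split the products defining $\sigma$ and $\tau$ around the positions $f$ and $f+1$ where their McMahon codes differ, then apply Propositions \ref{first_trans} and \ref{before_th} successively. First I would write, using $s_i=t_i$ for $i\neq f,f+1$,
\begin{eqnarray*}
\sigma & = & \beta \cdot \llsq f+1,s_{f+1}\rrsq\cdot\llsq f,s_f\rrsq\cdot\alpha,\\
\tau   & = & \beta \cdot \llsq f+1,s_{f+1}+1\rrsq\cdot\llsq f,s_f-1\rrsq\cdot\alpha,
\end{eqnarray*}
where $\beta=\prod_{i=n}^{f+2}\llsq i,s_i\rrsq$ and $\alpha=\prod_{i=f-1}^{1}\llsq i,s_i\rrsq=\prod_{i=f-1}^{1}\llsq i,t_i\rrsq$. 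Note that in the extreme cases $f=n-1$ or $f=2$ one of these products is empty (and $s_1=0$ ensures $\alpha$ is the identity in the latter).

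Next I would verify that Proposition \ref{first_trans} is applicable with parameters $n':=f+1$, $u:=s_{f+1}$, $v:=s_f$. Subexcedance of $\bsb{s}$ and $\bsb{t}$ forces $0\leq s_{f+1}\leq f-1$ (the upper bound because $t_{f+1}=s_{f+1}+1\leq f$) and $1\leq s_f\leq f-1$ (the lower bound because $t_f=s_f-1\geq 0$), exactly the hypotheses required. Invoking the proposition then gives
$$
\llsq f+1,s_{f+1}+1\rrsq\cdot\llsq f,s_f-1\rrsq \;=\; \llsq f+1,s_{f+1}\rrsq\cdot\llsq f,s_f\rrsq\cdot\langle f+1,s_f\rangle,
$$
and substituting this into the expression for $\tau$ and comparing with $\sigma$ yields
$$
\tau \;=\; \sigma\cdot\alpha^{-1}\cdot\langle f+1,s_f\rangle\cdot\alpha.
$$

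Finally I would apply Proposition \ref{before_th} with $\pi:=\alpha$ and the transposition $\langle f+1,s_f\rangle$ to conclude
$$
\tau \;=\; \sigma\cdot\langle \alpha^{-1}(f+1),\alpha^{-1}(s_f)\rangle \;=\; \sigma\cdot\langle \alpha^{-1}(u),\alpha^{-1}(v)\rangle,
$$
with $u=f+1$ and $v=s_f$, which is the stated formula. The main obstacle I anticipate is keeping the three-way factorization straight and correctly checking the range hypotheses of Proposition \ref{first_trans}, in particular the bound $s_{f+1}\leq f-1$, which comes not from subexcedance of $\bsb{s}$ alone but from subexcedance of $\bsb{t}$; once these bookkeeping details are settled, the proof is essentially an assembly of the two propositions.
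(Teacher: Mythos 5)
Your proof is correct and follows essentially the same route as the paper's: factor out the common tail $\alpha=\prod_{i=f-1}^{1}\llsq i,s_i\rrsq$, apply Proposition \ref{first_trans} (with $n$ replaced by $f+1$) to the two middle factors to get $\tau\cdot\alpha^{-1}=\sigma\cdot\alpha^{-1}\cdot\langle f+1,s_f\rangle$, then conjugate by $\alpha$ via Proposition \ref{before_th}. Your explicit verification of the range hypotheses of Proposition \ref{first_trans} (in particular that $s_{f+1}\leq f-1$ follows from subexcedance of $\bsb{t}$) is a detail the paper leaves implicit, and it checks out.
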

\begin{proof} $ $
\begin{itemize}
\item 
$\tau=\prod_{i=n}^{1}\llsq i,t_i \rrsq$, and so 
$\tau\cdot\alpha^{-1}=\prod_{i=n}^{f}\llsq i,t_i \rrsq$, and
\item
$\sigma=\prod_{i=n}^{1}\llsq i,s_i \rrsq$, and 
$\sigma\cdot\alpha^{-1}=\prod_{i=n}^{f}\llsq i,s_i \rrsq$.

\end{itemize}

But, by Proposition \ref{first_trans}, 

$$
\prod_{i=n}^{f}\llsq i,t_i \rrsq=
\prod_{i=n}^{f}\llsq i,s_i \rrsq\cdot
\langle f+1,s_f\rangle 
$$
or, equivalently

$$
\tau\cdot\alpha^{-1}=\sigma\cdot\alpha^{-1}\cdot\langle f+1,s_f\rangle,
$$
and by Proposition \ref{before_th}, the results holds.
\end{proof}

The previous theorem says that $\sigma$ and $\tau$
`have a small difference' provided that their McMahon code, 
$\bsb{s}$ and $\bsb{t}$, do so. Actually, we need that 
$\bsb{s}$ and $\bsb{t}$ are consecutive sequences 
in the list $\mathcal{S}(k,n)$
and they have a more particular shape (see Remark \ref{boure}).
In this context, permutations having minimal
McMahon code play a particular role.

It is routine to check the following proposition (see Figure \ref{Fig_3}
for an example).
\begin{Pro} 
\label{alpha_n_k}
Let $n$ and $k$ be two integers, $0<k\leq\frac{n(n-1)}{2}$;
$\bsb{a}=a_1a_2\ldots a_n$ be the smallest subexcedant sequence in co-lex order
with $\sum_{i=1}^n a_i=k$, and 
$\alpha=\alpha_{n,k}=\psi(\bsb{a})$ be the permutation in $\SS_n$ having
its McMahon code $\bsb{a}$.
Let  $j=\max\, \{ i : a_i\neq 0\}$, that is, $\bsb{a}$ has the form
$$ 012\ldots (j-3)(j-2)a_j00\ldots 0.$$
Then
\begin{equation}
\label{def_alpha}
\alpha(i)=\left\{ \begin {array}{ccc}
j-a_j-i & {\rm if} & 1\leq i\leq j-(a_j+1), \\
2j-a_j-i & {\rm if} & j-(a_j+1)<i\leq j, \\
i & {\rm if} & i>j.
\end {array}
\right.
\end{equation}
\end{Pro}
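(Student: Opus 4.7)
The plan is to unpack the product $\psi(\bsb{a})=\prod_{i=n}^{1}\llsq i,a_i\rrsq$ using the very special shape of $\bsb{a}$, reduce it to a reversal composed with a single prefix rotation, and then read off $\alpha(i)$ by cases from the two-line definition of $\llsq u,k\rrsq$.

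First I would observe that the factors with $a_i=0$ (that is $i=1$ and all $i>j$) are the identity and drop out of the product, so
$$
\alpha=\llsq j,a_j\rrsq\cdot\llsq j-1,j-2\rrsq\cdot\llsq j-2,j-3\rrsq\cdots\llsq 2,1\rrsq.
$$
The factors to the right of $\llsq j,a_j\rrsq$ all have the maximal shift $k=u-1$, which from the two-line notation send $i\mapsto i+1$ for $1\le i\le u-1$ and $u\mapsto 1$. Thus everything to the right of $\llsq j,a_j\rrsq$ depends only on $j$.

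Next, set $\beta_m=\llsq m,m-1\rrsq\cdot\llsq m-1,m-2\rrsq\cdots\llsq 2,1\rrsq$. I would prove by induction on $m$ that $\beta_m$ is the reversal on $\{1,\ldots,m\}$, i.e.\ $\beta_m(i)=m+1-i$ for $1\le i\le m$ and $\beta_m(i)=i$ for $i>m$. The base case $m=2$ is immediate, and for the induction step one simply composes the known reversal $\beta_{m-1}$ with $\llsq m,m-1\rrsq$: the map $\llsq m,m-1\rrsq$ sends $i$ to $i+1$ (mod $m$ on $\{1,\ldots,m\}$), and precomposing with the reversal on $\{1,\ldots,m-1\}$ flips the initial segment to give the reversal on $\{1,\ldots,m\}$. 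This leaves
$$
\alpha(i)=\llsq j,a_j\rrsq\bigl(\beta_{j-1}(i)\bigr).
$$

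Finally I would do the case analysis. For $i>j$, both maps fix $i$ so $\alpha(i)=i$. For $1\le i\le j-1$ we have $\beta_{j-1}(i)=j-i\in\{1,\ldots,j-1\}$, and I split according to the definition of $\llsq j,a_j\rrsq$: if $j-i\le a_j$ (equivalently $i\ge j-a_j$), then $\llsq j,a_j\rrsq(j-i)=j-a_j+(j-i)=2j-a_j-i$; if $j-i\ge a_j+1$ (equivalently $i\le j-a_j-1=j-(a_j+1)$), then $\llsq j,a_j\rrsq(j-i)=(j-i)-a_j=j-a_j-i$. For $i=j$ we use $\beta_{j-1}(j)=j$ and $\llsq j,a_j\rrsq(j)=j-a_j=2j-a_j-i$, which belongs to the upper range $j-(a_j+1)<i\le j$. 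Collecting the three ranges matches \eqref{def_alpha} exactly.

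The only step that requires any care is the induction establishing that $\beta_{j-1}$ is the reversal; once this is in hand the remaining verification is pure bookkeeping with the two-line notation of $\llsq j,a_j\rrsq$.
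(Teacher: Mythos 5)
Your proof is correct. The paper itself offers no argument here --- it simply declares the proposition ``routine to check'' and points to a figure --- so there is nothing to compare against; your decomposition $\alpha=\llsq j,a_j\rrsq\cdot\beta_{j-1}$, the induction showing $\beta_{j-1}$ is the reversal of $\{1,\ldots,j-1\}$, and the final case analysis on $\llsq j,a_j\rrsq$ constitute exactly the verification the author left implicit, and every case (including $i=j$ landing in the middle range and $i>j$ being fixed) checks out. Incidentally, your formula-driven computation also exposes a typo in the paper's Figure~\ref{Fig_3}: for $\bsb{a}=0\,1\,2\,3\,4\,3\,0\,0$ the proposition (and your proof) give $\alpha=2\,1\,6\,5\,4\,3\,7\,8$, not $2\,1\,6\,5\,4\,3\,8\,7$ as displayed there, since $a_7=a_8=0$ forces $\alpha$ to fix $7$ and $8$.
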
 

\begin{figure}
\centering
\unitlength=3mm
\hspace{4mm}

\unitlength=3mm
\begin{picture}(15,16)
\put(0.5,1.5){\vector(0,1){9.5}}
\put(0.5,1.5){\vector(1,0){9.5}}
\put(8.5,1.5){\line(0,1){8}}

\put(0.5,9.5){\line(1,0){8}}



\put(1,3){\circle*{0.4}} 
\put(2,2){\circle*{0.4}} 
\put(3,7){\circle*{0.4}} 
\put(4,6){\circle*{0.4}} 
\put(5,5){\circle*{0.4}} 
\put(6,4){\circle*{0.4}} 
\put(7,9){\circle*{0.4}} 
\put(8,8){\circle*{0.4}} 

\put(7,0.){ $i$}
\put(-2.5,8){ $\alpha(i)$}
\end{picture}
\caption{
The permutation $\alpha=2\,1\,6\,5\,4\,3\,8\,7$  with the McMahon code 
$\bsb{a}=0\,1\,2\,3\,4\,3\,0\,0$, 
the the smallest, in co-lex order, subexcedant sequence in $S(13,8)$,
see Proposition \ref{alpha_n_k}.}
\label{Fig_3}
\end{figure}

\begin{Rem}
\label{rem_inv} 
The permutation $\alpha$ defined in Proposition in \ref{alpha_n_k}
is an involution, that is $\alpha^{-1}=\alpha$.

\end{Rem} 

Combining Proposition \ref{alpha_n_k} and Remark \ref{rem_inv},
Theorem \ref{sigma_t_a} becomes in particular

\begin{Pro}
\label{combi}
Let $\sigma$, $\tau$, $\bsb{s}$ and $\bsb{t}$ be as in Theorem \ref{sigma_t_a}.
In addition, let suppose that there is a $j$, $0\leq j\leq f-1$, such that 
\begin{itemize}
\item[1.] $s_i=t_i=0$ for $j<i\leq f-1$, and
\item[2.] if $j>0$, then 
    \begin{itemize}
    \item $s_j=t_j\neq 0$, and 
    \item $s_i=t_i=i-1$ for $1\leq i<j$.
    \end{itemize}
\end{itemize}
Then
$$
\tau=\sigma \cdot \langle \phi_j(f+1), \phi_j(s_f)\rangle
$$
with 
\begin{equation}
\label{phi}
\phi_j(i)=\left\{ \begin {array}{ccc}
j-s_j-i & {\rm if} & 1\leq i\leq j-(s_j+1), \\
2j-s_j-i & {\rm if} & j-(s_j+1)<i\leq j, \\
i & {\rm if} & i>j.
\end {array}
\right.
\end{equation}
\end{Pro}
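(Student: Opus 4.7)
The plan is to reduce to Theorem \ref{sigma_t_a} and then identify the permutation $\alpha$ it produces with the explicit involution of Proposition \ref{alpha_n_k}.

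First, since the hypotheses of Proposition \ref{combi} contain those of Theorem \ref{sigma_t_a} verbatim, applying the theorem yields
$$\tau = \sigma \cdot \langle \alpha^{-1}(f+1),\, \alpha^{-1}(s_f)\rangle, \qquad \alpha = \prod_{i=f-1}^{1}\llsq i, s_i\rrsq.$$
It remains to show that $\alpha^{-1}$ acts on the two entries of this transposition precisely as $\phi_j$.

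Next, I would unpack the extra hypotheses of Proposition \ref{combi}: conditions 1 and 2 together force the prefix $s_1 s_2 \ldots s_{f-1}$ to take exactly the shape $0,\,1,\,2,\,\ldots,\,j-2,\,s_j,\,0,\,\ldots,\,0$ (all zeros when $j=0$). This is precisely the form described in Proposition \ref{alpha_n_k} for the smallest, in co-lex order, subexcedant sequence of weight $\tfrac{(j-1)(j-2)}{2}+s_j$. That proposition therefore supplies a closed-form description of $\alpha$ on positions $1,\ldots,f-1$ which, after the trivial substitution $a_j \mapsto s_j$, matches the three branches of (\ref{phi}) verbatim. Since each factor $\llsq i, s_i\rrsq$ fixes every position larger than $i$, $\alpha$ also acts as the identity on positions $\ge f$, which coincides with the third branch of $\phi_j$ there because $j \le f-1$. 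Hence $\alpha = \phi_j$ as permutations in $\SS_n$.

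Finally, Remark \ref{rem_inv} asserts that $\alpha$ is an involution, so $\alpha^{-1} = \alpha = \phi_j$. Substituting into the displayed equation above gives $\tau = \sigma \cdot \langle \phi_j(f+1),\, \phi_j(s_f)\rangle$, which is the claim.

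The only real bookkeeping lies in checking that each argument of the transposition falls into the branch of $\phi_j$ claimed by the formula: the entry $f+1$ satisfies $f+1 > j$ (since $j \le f-1$), so the third branch of (\ref{phi}) gives $\phi_j(f+1) = f+1$; and $s_f$ lies in $\{1,\ldots,f-1\}$ (the lower bound forced by the fact that $t_f = s_f - 1$ must be non-negative), so it is assigned by one of the first two branches of (\ref{phi}) depending on its position relative to $j$. The degenerate case $j=0$ is absorbed trivially by the third branch alone. These verifications are routine substitutions, not a genuine obstacle.
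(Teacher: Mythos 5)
Your proof is correct and follows exactly the paper's route: the paper itself only says ``Combining Proposition~\ref{alpha_n_k} and Remark~\ref{rem_inv}, Theorem~\ref{sigma_t_a} becomes in particular'' this proposition, and you have simply filled in the details of that combination (hypotheses 1--2 put the prefix $s_1\ldots s_{f-1}$ in the canonical co-lex-minimal shape of Proposition~\ref{alpha_n_k}, so $\alpha=\phi_j$, and Remark~\ref{rem_inv} gives $\alpha^{-1}=\alpha$). The only nit is your closing aside claiming $s_f$ must fall into one of the first two branches of (\ref{phi}): if $s_f>j$ it falls into the third branch instead, but this has no bearing on the validity of the argument since the identity $\alpha^{-1}=\phi_j$ is what is actually used.
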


\noindent
Notice that, the conditions 1 and 2 in the previous proposition 
require that $s_1s_2\ldots s_{f-1}=t_1t_2\ldots t_{f-1}$
be the smallest subexcedant sequence, in co-lex order, in $S(f-1)$ with
fixed value for $\sum_{i=1}^{f-1}s_i=\sum_{i=1}^{f-1}t_i$.
Also, for point 2, necessarily $j\geq 2$.

%
%
%

\section{Generating permutations with a given major index}
\label{gen_perms}

Let $\sigma$ and $\tau$ be two permutations 
with their McMahon code $\bsb{s}=s_1s_2\ldots s_n$
and  $\bsb{t}=t_1t_2\ldots t_n$ belonging to $S(k,n)$, and 
differing in positions $f$ and $f+1$ by $1$ and $-1$ in 
these positions.

Let 
\begin{itemize}
\item $v=s_f-t_f\in \{-1,1\}$, and 
\item $x=\sum_{i=1}^{f-1}s_i=\sum_{i=1}^{f-1}t_i$.
\end{itemize}

If $s_1s_2\ldots s_{f-1}$ is the smallest sequence in 
$S(x,f-1)$, in co-lex order, then applying Proposition \ref{combi}
it follows that the run of the procedure {\tt transp($v,f,x$)} defined in 
Figure~\ref{algos_transp} transforms $\sigma$ into $\tau$ and $\bsb{s}$ into $\bsb{t}$.

\begin{figure}[h]
\begin{center}
{\tt
\begin{tabular}{|c|}
\hline
\begin{minipage}[c]{.46\linewidth}
\begin{tabbing}
\hspace{0.75cm}\=\hspace{1.cm}\=\hspace{0.5cm}\=\hspace{1.6cm}\=\hspace{1.9cm}
            \=\hspace{1.5cm}\kill
procedure transp($v,f,x$)\\
\> $j:=\min \{i\,:\, \frac{i(i-1)}{2}\geq x\}$;\\
\> if $v=1$\\
\> then $\sigma:=\sigma\cdot \langle\phi_j(f+1),\phi_j(s[f]) \rangle$;\\
\> else $\sigma:=\sigma\cdot \langle\phi_j(f+1),\phi_j(s[f]+1) \rangle$;\\
\> endif\\
\> $s[f]:=s[f]-v$;\\
\> $s[f+1]:=s[f+1]+v$;\\
end procedure.
\end{tabbing}
\end{minipage}\\
\hline
\end{tabular}
}
\caption{\label{algos_transp} Algorithm {\tt transp}, where $\phi_j$
         is defined in relation (\ref{phi}).}
\end{center}
\end{figure}

Let now $f$ be the leftmost position where two consecutive 
sequences $\bsb{s}$ and $\bsb{t}$ in the list $\mathcal{S}(k,n)$
differ, and $\sigma$ and $\tau$ be the permutations 
having they McMahon code $\bsb{s}$ and $\bsb{t}$.
By Remarks \ref{rem_inter} and \ref{boure} we have that,
repeated calls of {\tt transp} 
transform $\bsb{s}$ into $\bsb{t}$, and $\sigma$ into $\tau$.
This is true for each possible 
$3$-tuples given in Definition~\ref{3_tuple} and 
corresponding to two consecutive 
subexcedant sequences in $\mathcal{S}(k,n)$, and
algorithm {\tt Update\_Perm} in Figure \ref{algos_update}
exhausts all these $3$-tuples.

For example, if $\bsb{s}$ and $\bsb{t}$ are the two sequences in 
Example \ref{un_example} with they difference  $(1,-2,1)$, $f=2$ and 
$x=0$, then the calls
\medskip
 
 {\tt transp($1,f,x$)}; \\
 \indent {\tt transp($-1,f+1,x+s[f]$)};\\

\noindent
transform $\bsb{s}$ into $\bsb{t}$ and 
$\sigma$ into  $\tau$. 

Algorithm {\tt Gen2\_Gray} provides $p$,
the rightmost position where the current sequence $\bsb{c}$ differs
from the previous generated one, and $u=\sum_{i=1}^p c_i$.
Algorithm {\tt Update\_Perm} uses $f$, the leftmost position where
$\bsb{c}$ differs from the previous generated sequence, and  
$x=\sum_{i=1}^{f-1}c_i$.

\begin{figure}[h]
\begin{center}
{\tt
\begin{tabular}{|c|}
\hline
\begin{minipage}[c]{.46\linewidth}
\begin{tabbing}
\hspace{0.75cm}\=\hspace{0.5cm}\=\hspace{1.8cm}\=\hspace{1.6cm}\=\hspace{1.9cm}
            \=\hspace{1.5cm}\kill
procedure Update\_Perm($p,u$)\\
\> $x:=u-c[p]-c[p-1]$;\\
\> if $p-2\geq 1$ and $s[p-2]=c[p-2]$\\
\> then $f:=p-1$;\\
\> else  $f:=p-2$; $x:=x-c[f]$;\\
\> endif\\
\> $(a_1,a_2):=(s[f]-c[f],s[f+1]-c[f+1])$;\\
\> if $f+2>n$ then $a_3:=0$; else $a_3:=s[f+2]-c[f+2]$; endif\\
\> if $a_1>0$ then $v:=1$; else $v:=-1$; endif\\
\> case $(a_1,a_2,a_3)$ of\\
\>\> $\pm(1,-1,0)$\> : transp($v,f,x$); \\
\>\> $\pm(2,-2,0)$\> : transp($v,f,x$); transp($v,f,x)$\\
\>\> $\pm(1,-2,1)$\> : transp($v,f,x$); transp($-v,f+1,x+s[f]$);\\
\>\> $\pm(1,-3,2)$\> : transp($v,f,x$); transp($-v,f+1,x+s[f]$);transp($-v,f+1,x+s(f]$);\\
\>\> $\pm(1,1,-2)$\> : transp($v,f+1,x+s[f]$); transp($v,f,x$); transp($v,f+1,x+s[f]$);\\
\>\> \, $(1,0,-1)$   \> : transp($1,f,x$); transp($1,f+1,x+s[f]$);\\
\>\> \, $(-1,0,1)$   \> : transp($-1,f+1,x+s[f]$); transp($-1,f,x$);\\
\> end case \\
end procedure.
\end{tabbing}
\end{minipage}\\
\hline
\end{tabular}
}
\caption{\label{algos_update} Algorithm {\tt Update\_Perm}.}
\end{center}
\end{figure}

Now, we sketch the generating algorithm for the set 
of permutations in $\SS_n$ having index $k$.
\begin{itemize}
\item initialize $\bsb{s}$ by the smallest, in co-lex order, sequence
      in $S(k,n)$ and $\sigma$ by the permutation in $\SS_n$ 
      having its McMahon code $\bsb{s}$,
\item run  {\tt Gen2\_Gray($k,n,0,0,0$)} where 
      {\tt output($p,u$)} is replaced by {\tt Update\_Perm($p,u$)}.
\end{itemize}

The obtained list of permutations is the image of the Gray code
$\mathcal{S}(k,n)$ through the bijection $\psi$ defined in relation 
(\ref{def_psi}); it consists of all permutations in $\SS_n$ with 
major index equal to $k$, and two consecutive permutations differ 
by at most three transpositions.
See Table \ref{list_pref} for the list of permutations in $\SS_6$
and with major index $4$.

\section{Final remarks}
\label{Conc}

Numerical evidences show that if we change the generating order 
of algorithm {\tt Gen\_Colex} as for 
{\tt Gen1\_Gray}, but without restricting it to subexcedant
sequences, then the obtained list for bounded compositions 
is still a Gray code with the closeness definition slightly relaxed:
two consecutive compositions differ in at most four adjacent positions.
Also, T. Walsh gives in \cite{Walsh} an efficient 
generating algorithm for a Gray code 
for bounded compositions of an
integer, and in particular for subexcedant sequences. 
In this Gray code two consecutive sequences
differ in two positions and by $1$ and $-1$ in these positions;
but these positions can be arbitrarily far, 
and so the image of this Gray code through the bijection 
$\psi$ defined by relation (\ref{def_psi})  in Section \ref{sec_Mc_code}
does not give a Gray code for
permutations with a fixed index.

\end{document}